\newtheorem*{theorem*}{Theorem}
\newtheorem{lemma}{Lemma}
\begin{document}

\title[Morley Trisectors \& Law of Sines]{Morley Trisectors and the Law of Sines \\  with Reflections\\}

\author{ Eric L. Grinberg \& Mehmet Orhon }
\address{UMass Boston \&  University of New Hampshire}
\email{eric.grinberg@umb.edu and mo@unh.edu}

\subjclass[2010]{51N20, 51A20, 97G60 } \keywords{Morley's trisector theorem , law of sines, trigonometry, plane geometry}

\begin{abstract}
We invoke the law of sines to prove  Morley's trisector theorem. Though the sinusoidal function appears, the proof is safe for trigonometric distancing.
\vspace{-0.3truein}
\end{abstract}

\maketitle

\vspace{-10pt}
\section{Telegraphic Introduction.}

We will provide a proof of the celebrated \emph{Morley trisector theorem}, relying on the law of sines for the crux of the argument. Although we employ the sinusoidal function,  the proof is deemed safe for those with trigonometric restrictions.

\begin{figure}[h!] 
\centering
\includegraphics[scale=0.6]{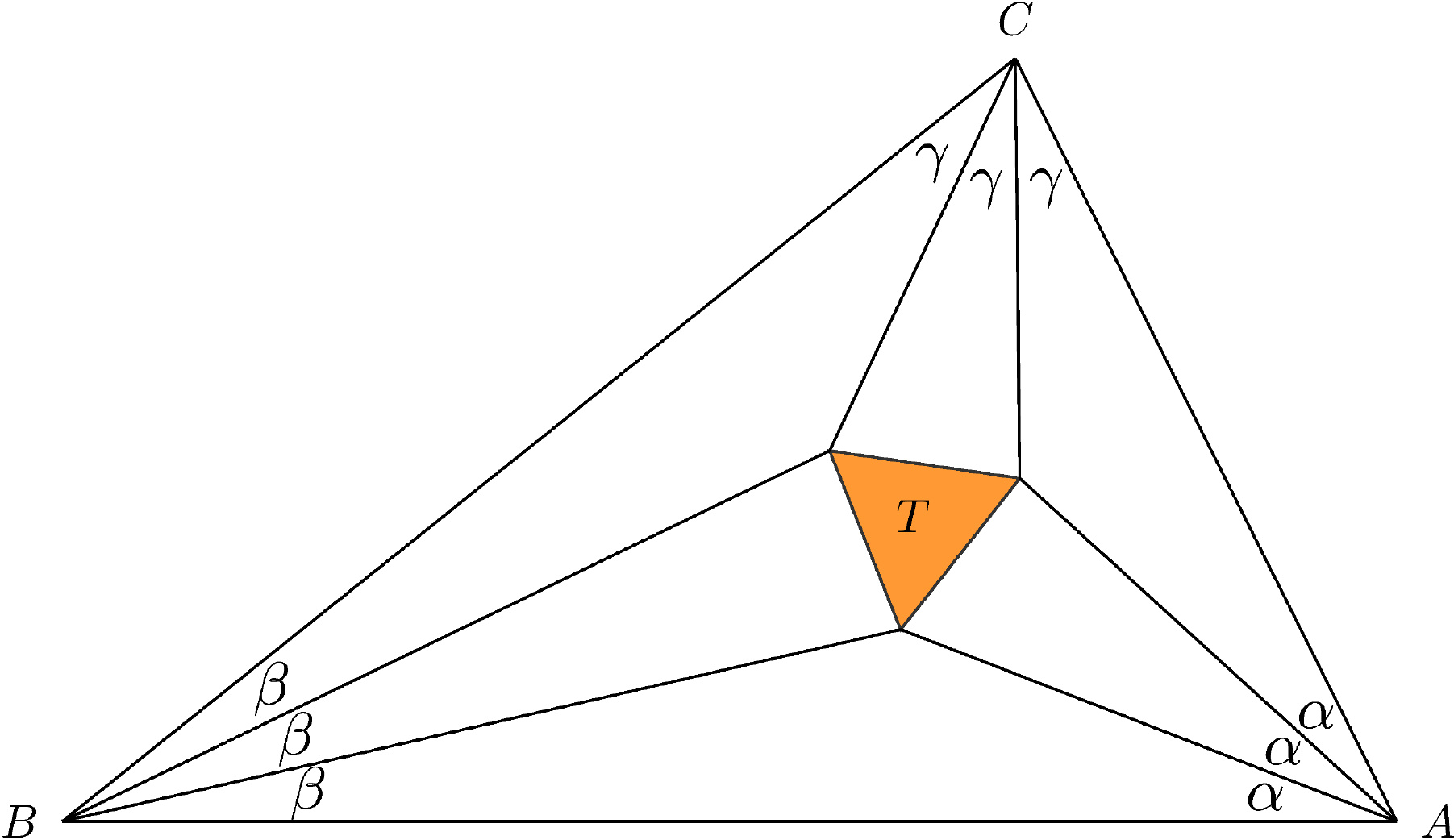}
 \caption{The Morley triangle $T$.}
 \label{fig:The Morley Triangle}
 \end{figure}

We begin with a triangle $\triangle ABC$ having interior angles $3 \alpha, 3 \beta, 3 \gamma$. By trisecting each interior angle and then joining nearby rays of trisection, we obtain three points inside $\triangle ABC$. These interior points form a triangle $T$ (see Figure \ref{fig:The Morley Triangle}) and we wish to show that $T$ is equilateral.  The triangle $\triangle ABC$, which we'll call the native triangle, will figure only fractionally in our discussion, until it makes its full return.

\section{A Nonlinear Lemma.} 

\begin{lemma}
\label{nonlinear_lemma}
Let $\alpha , \beta$ and $\alpha^\prime , \beta^\prime$ be two pairs of proper acute angles. Assume they have the same sum and the same ratio of sines:
\[
\alpha+\beta =\alpha^\prime +  \beta^\prime 
\,  ; \quad
\frac{\sin ( \alpha ) }{ \sin ( \beta ) } = \frac{\sin ( \alpha^\prime ) }{ \sin ( \beta^\prime ) }
\text{.} \]
Then the corresponding angles are equal:
\[ 
\alpha= \alpha^\prime  \,  ; \quad \beta=\beta^\prime
\text{.} \]
\end{lemma}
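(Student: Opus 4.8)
The plan is to fix the common sum $S = \alpha + \beta = \alpha' + \beta'$ and show that, among proper acute angles, the sine ratio determines the individual angle $\alpha$ uniquely. The only property of the sine I intend to use is that on the open interval $(0,\pi/2)$ it is strictly increasing and strictly positive; since all four angles are proper acute angles, they live in exactly this interval, so this property is available throughout.

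First I would argue by contradiction via a comparison. Suppose $\alpha \neq \alpha'$, and without loss of generality take $\alpha > \alpha'$. Because the two sums agree, $\beta = S - \alpha < S - \alpha' = \beta'$. Now I invoke monotonicity of $\sin$ on $(0,\pi/2)$ twice: from $\alpha > \alpha'$ I get $\sin(\alpha) > \sin(\alpha') > 0$, and from $\beta < \beta'$ I get $0 < \sin(\beta) < \sin(\beta')$. Combining these, the left-hand ratio has a strictly larger numerator and a strictly smaller positive denominator than the right-hand ratio, so $\frac{\sin(\alpha)}{\sin(\beta)} > \frac{\sin(\alpha')}{\sin(\beta')}$, contradicting the assumed equality of the two ratios. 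The symmetric case $\alpha < \alpha'$ produces the reverse strict inequality, again a contradiction. Hence $\alpha = \alpha'$, and then $\beta = S - \alpha = S - \alpha' = \beta'$.

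The step where the hypotheses genuinely earn their keep is the acuteness: monotonicity of $\sin$ breaks down as soon as angles are permitted past $\pi/2$, and the positivity of all four sines is what licenses the free passage between inequalities of sines and inequalities of their quotients. So the ``main obstacle,'' modest as it is, is to use the acute restriction honestly rather than quietly appealing to a global monotonicity the sine does not possess.

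As an alternative I could replace the comparison by a quantitative version of the same idea. Writing the ratio as a function of a single variable, $f(\alpha) = \sin(\alpha)/\sin(S - \alpha)$, and differentiating, the sine addition formula collapses the numerator of $f'$, giving $f'(\alpha) = \sin(S)/\sin^2(S - \alpha)$. Since $S = \alpha + \beta \in (0,\pi)$ forces $\sin(S) > 0$ while the denominator is positive, $f$ is strictly increasing and therefore injective, yielding $\alpha = \alpha'$ by a one-line calculus route and the same conclusion for $\beta$.
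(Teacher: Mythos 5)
Your main argument is correct and is essentially the paper's own proof: assume the angles differ, use the equal sums to get opposite strict inequalities for $\beta,\beta^\prime$, apply monotonicity and positivity of $\sin$ on acute angles, and derive a strict inequality of ratios contradicting the hypothesis. The calculus variant you sketch at the end is a nice bonus but not needed; the comparison argument matches the paper step for step.
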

 Gazing at the hypothesis, we will denote the relation to the left of the semi-colon the \emph{linear relation} and denote the other as the \emph{nonlinear} relation.
\begin{proof}
Suppose that $\alpha < \alpha^\prime$. By the linear relation in the hypothesis, we then have 
\( \beta > \beta^\prime \).  Noting that the sine function is monotone increasing in the domain of acute angles,  we have 
 \begin{equation}
 \label{one}
0 <  \sin ( \alpha ) < \sin ( \alpha^\prime ) \, ; \quad  \sin ( \beta ) > \sin ( \beta^\prime ) > 0 \,
 \text{.} 
 \end{equation}
Taking the reciprocal version of the right portion of \eqref{one} and multiplying, we obtain
\[
\frac{ \sin ( \alpha ) }{ \sin ( \beta )} < \frac{ \sin ( \alpha^\prime ) }{ \sin ( \beta^\prime )} 
\text{,}\]
which contradicts the nonlinear portion of the hypothesis. Next,  a similar argument excludes the possibility $\alpha > \alpha^\prime$. So $\alpha=\alpha^\prime$ and hence $\beta=\beta^\prime$.
\end{proof}

\section{Dashed Hopes.}

We now consider an abstract equilateral triangle $W$ (in \emph{the cloud}). 
Draw $W$ with one side close to horizontal at the top and subtend angles of $\gamma +60^\circ $ to the left and right vertices of this nearly horizontal side. Form left and right triangles with the subtended angles, so that the new far angles are $\beta$ and $\alpha$ and the new angles at the lower vertex of $W$ are $\alpha+60^\circ$ and $\beta+60^\circ$, as in Figure \ref{fig:An abstract triangle}. Since $\alpha + \beta + \gamma=60^\circ$, this is clearly possible.

\begin{figure}[h!] 
\centering
\includegraphics{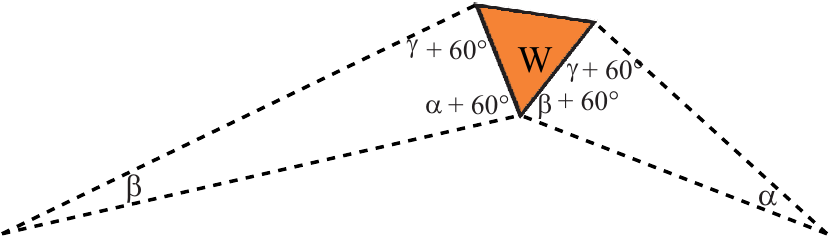}
 \caption{An abstract triangle.}
 \label{fig:An abstract triangle}
 \end{figure}

Now introduce a horizontal line segment connecting the bottom vertices of the two dashed triangles.
 We define angles $\beta^\prime$ and $\alpha^\prime$ associated with $\beta$ and $\alpha$ by this new segment, as illustrated in Figure \ref{fig:Equality of angle sums}, with the label $W$ removed to reduce clutter. Gazing around the top vertex of the newly formed triangle, we see that the top angle, added to $\alpha+60^\circ , \beta+60^\circ$, and $60^\circ$, must total $360^\circ$. Since $\alpha+\beta+\gamma = 60^\circ$, the top angle must be $\gamma + 120^\circ$.  Thus the labels $\beta^\prime , \alpha^\prime$ are definitional, and $\gamma + 120^\circ$ is consequential.

 \begin{figure}[h!] 
\centering
\includegraphics{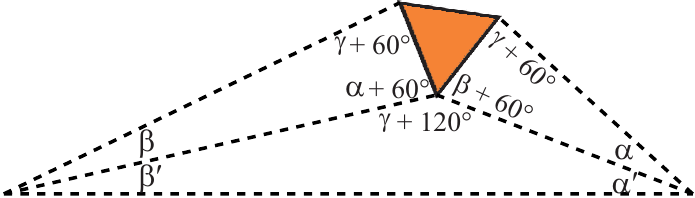}
 \caption{Equality of angle sums.}
 \label{fig:Equality of angle sums}
\end{figure}

 \begin{lemma} In Figure \ref{fig:Equality of angle sums},
 \begin{equation}
 \label{eq:two}
 \alpha + \beta = \alpha^\prime + \beta ^\prime \text{.}  
 \end{equation}
 \end{lemma}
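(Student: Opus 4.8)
The plan is to read off \eqref{eq:two} directly from the angle sum of the new triangle created by the horizontal segment, namely the triangle whose apex is the lower vertex of $W$ and whose base joins the two far vertices of the dashed triangles. The preceding paragraph has already done the essential spadework: by adding the four angles that crowd around the lower vertex of $W$---the $60^\circ$ angle of $W$ itself, the two lower angles $\alpha+60^\circ$ and $\beta+60^\circ$ of the dashed triangles, and the apex angle of the new triangle---and setting their total equal to a full turn of $360^\circ$, one finds that the apex angle equals $\gamma+120^\circ$.

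With that apex angle in hand, the remaining two angles of the new triangle are exactly the base angles $\beta^\prime$ (at the far vertex of the left dashed triangle) and $\alpha^\prime$ (at the far vertex of the right one), which is precisely how those labels were defined by the new segment. I would then simply invoke the angle sum of a triangle for this new triangle,
\[
\alpha^\prime + \beta^\prime + (\gamma + 120^\circ) = 180^\circ,
\]
so that $\alpha^\prime + \beta^\prime = 60^\circ - \gamma$. On the other side of the ledger, each native interior angle was trisected, giving $3\alpha + 3\beta + 3\gamma = 180^\circ$, that is $\alpha + \beta + \gamma = 60^\circ$, and hence $\alpha + \beta = 60^\circ - \gamma$ as well.

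Comparing these two expressions yields $\alpha + \beta = \alpha^\prime + \beta^\prime$, which is \eqref{eq:two}. There is no genuinely hard step: everything rides on the value $\gamma + 120^\circ$ for the apex angle, and that full-turn computation is exactly the one already carried out in the text. The only point demanding care is the bookkeeping---confirming that the four angles around the lower vertex of $W$ honestly tile the full $360^\circ$ with no overlap and no gap, so that the apex of the new triangle is correctly $\gamma + 120^\circ$ rather than some other combination. Once that is granted, \eqref{eq:two} drops out of the triangle angle sum with nothing further to prove.
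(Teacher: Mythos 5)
Your proof is correct and follows the same route as the paper's: both use the fact (established in the preceding paragraph by the full-turn computation at the lower vertex of $W$) that the apex angle of the lowest triangle is $\gamma+120^\circ$, apply the triangle angle sum to get $\alpha^\prime+\beta^\prime=60^\circ-\gamma$, and compare with $\alpha+\beta+\gamma=60^\circ$ from the trisection. No differences worth noting.
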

  
 \begin{proof}
  Adding interior angles in the lowest triangle in Figure 3 we have
  \[
  \alpha^\prime + \beta ^\prime + \gamma + 120^\circ=180^\circ \text{.}
  \]
  Since $\alpha, \beta, \gamma$ are trisected editions of the interior angles of our native triangle, we have
  \[
  \alpha + \beta + \gamma = 60^\circ \text{.}
  \]
  Combining the last two equations we have 
  \(  \alpha + \beta = \alpha^\prime + \beta ^\prime \) .
 \end{proof}
 We now sleepily label with $Z$'s the $3$ sides of the abstract equilateral triangle. In the lowest triangle of Figure \ref{fig:Equality of angle sums} we label with $X$ the side opposite $\alpha^\prime$ and we label with $Y$ the side opposite $\beta^\prime$, obtaining Figure \ref{fig:Equality of angle pairs}.

\begin{figure}[h] 
\centering
\includegraphics[scale=0.65]{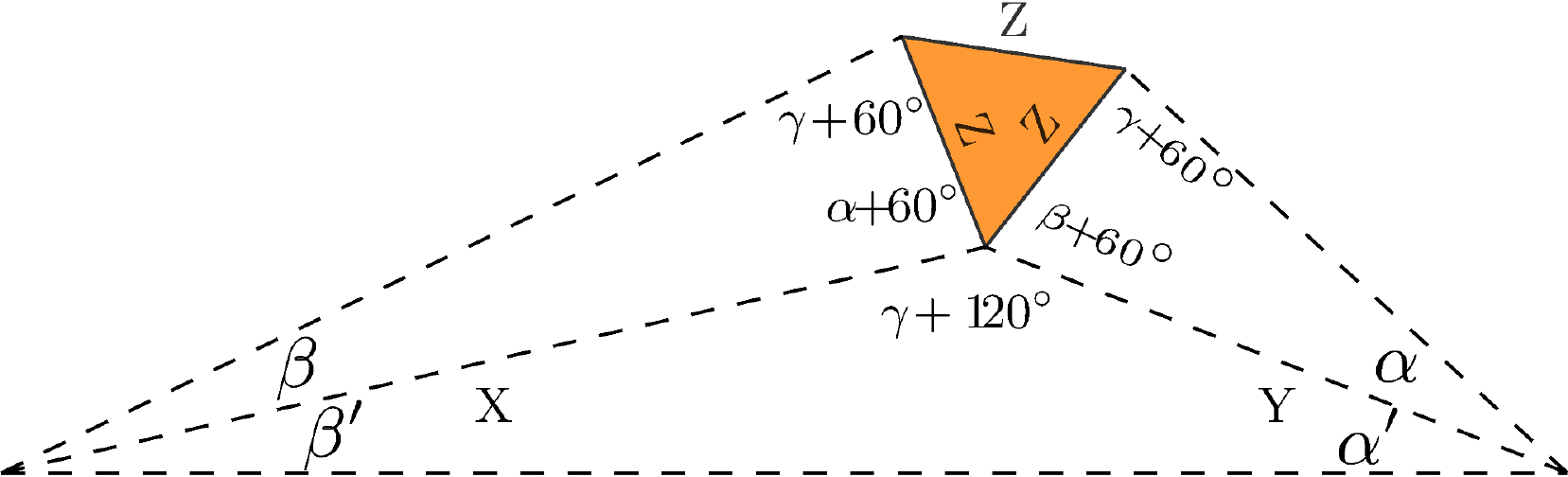}
 \caption{Equality of angle pairs.}
 \label{fig:Equality of angle pairs}
\end{figure}

 \begin{lemma}  The corresponding companion angles introduced by the lowest triangle are equal. That is, 
 \[
 \alpha = \alpha^\prime \, ; \quad \beta = \beta^\prime \text{.}
 \]
 \end{lemma}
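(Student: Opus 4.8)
The plan is to recognize the claim as an instance of the Nonlinear Lemma (Lemma~\ref{nonlinear_lemma}). Its linear hypothesis is already supplied by the preceding lemma, equation~\eqref{eq:two}: $\alpha+\beta=\alpha'+\beta'$. Its proper-acute hypothesis comes for free, since $\alpha+\beta+\gamma=60^\circ$ with all three summands positive forces $\alpha,\beta<60^\circ$, whence $\alpha'+\beta'=\alpha+\beta<60^\circ$ makes $\alpha',\beta'$ acute as well. Thus the entire task reduces to verifying the single nonlinear relation $\dfrac{\sin\alpha}{\sin\beta}=\dfrac{\sin\alpha'}{\sin\beta'}$; once established, Lemma~\ref{nonlinear_lemma} yields $\alpha=\alpha'$ and $\beta=\beta'$ at once.

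To manufacture that ratio I would use the law of sines three times, exploiting the key structural fact that the sides $X$ and $Y$ of the lowest triangle are each shared with one of the two dashed triangles, and that both dashed triangles are anchored to the equilateral triangle $W$ by a side of common length $Z$ and by the common subtended angle $\gamma+60^\circ$. First, in the lowest triangle $X$ is opposite $\alpha'$ and $Y$ is opposite $\beta'$, so the law of sines gives $\dfrac{X}{Y}=\dfrac{\sin\alpha'}{\sin\beta'}$. Next, in each dashed triangle the shared side ($X$ in one, $Y$ in the other) is opposite the angle $\gamma+60^\circ$, while the equilateral side $Z$ is opposite the far angle ($\beta$ in one triangle, $\alpha$ in the other). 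Applying the law of sines in each then expresses $X$ and $Y$ as $Z\sin(\gamma+60^\circ)$ divided by $\sin\beta$ and $\sin\alpha$ respectively.

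The decisive and delicate step is the bookkeeping in forming the quotient $\dfrac{X}{Y}$: the common factor $Z\sin(\gamma+60^\circ)$ cancels, leaving $\dfrac{X}{Y}=\dfrac{\sin\alpha}{\sin\beta}$. I expect the main obstacle to be precisely this label-matching --- pinning down which far angle ($\alpha$ or $\beta$) sits opposite $Z$ in which dashed triangle, and which of $\alpha',\beta'$ sits at which foot of the connecting segment --- so that the far angles land in the denominators and the ratio emerges as $\sin\alpha/\sin\beta$ rather than its reciprocal. Combining the two expressions for $X/Y$ gives the nonlinear relation $\dfrac{\sin\alpha}{\sin\beta}=\dfrac{\sin\alpha'}{\sin\beta'}$, and an appeal to Lemma~\ref{nonlinear_lemma} completes the proof.
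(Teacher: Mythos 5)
Your proposal is correct and follows essentially the same route as the paper: the law of sines in each dashed triangle (with $Z$ opposite the far angle and $X$, resp.\ $Y$, opposite $\gamma+60^\circ$) gives $X/Y=\sin\alpha/\sin\beta$, the law of sines in the lowest triangle gives $X/Y=\sin\alpha'/\sin\beta'$, and Lemma~\ref{nonlinear_lemma} together with \eqref{eq:two} finishes the argument. Your explicit check that $\alpha',\beta'$ are acute is a small bonus the paper leaves implicit.
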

\begin{proof}
By the law of sines in the upper-left dashed triangle of Figure \ref{fig:Equality of angle pairs}, 
\[ 
\frac { \sin (\beta ) }{ Z} = \frac{ \sin (\gamma+ 60^\circ )}{ X} \text{.}
\]
By the law of sines in the upper-right dashed triangle of Figure \ref{fig:Equality of angle pairs},
\[ 
\frac { \sin (\alpha  ) }{ Z} = \frac{ \sin (\gamma+ 60^\circ )}{ Y} \text{.}
\]
Dividing the last two equations, we obtain $\sin ( \alpha ) / \sin ( \beta ) = X/Y$.  Making a direct  law of sines argument for $\alpha^\prime, \beta^\prime$  using the lowest triangle, we obtain
\begin{equation}
\label{eq:three}
   \frac{ \sin ( \alpha ) }{ \sin ( \beta ) }
= \frac{X}{Y} 
= \frac{ \sin ( \alpha^\prime ) }{ \sin ( \beta^\prime ) }\text{.}  
 \end{equation}
In \eqref{eq:two}  above we established that 
\(
\alpha + \beta = \alpha^\prime + \beta ^\prime
\).
This, together with  \eqref{eq:three} and Lemma \ref{nonlinear_lemma}
completes the proof.

\end{proof}

 \begin{theorem*} The Morley trisector triangle is equilateral.
 \end{theorem*}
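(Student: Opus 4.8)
The plan is to run the construction backwards. Everything so far has been synthesized from the bare angle data $\alpha,\beta,\gamma$ around a triangle $W$ that is equilateral \emph{by fiat}; I will argue that the completed figure is a scaled copy of the Morley configuration of $\triangle ABC$, forcing the genuine Morley triangle to be equilateral as well.

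First I would complete the figure. The far vertex $P$ of the upper-left dashed triangle carries the angle $\beta$ and the far vertex $Q$ of the upper-right dashed triangle carries $\alpha$; these will be two vertices of the reconstructed native triangle. The third side of $W$ still lies exposed, so I would attach to it a third flank triangle (far angle $\gamma$) and then fill the three corners, carrying out the reflections promised in the title: each reflection across an outer edge reproduces the block with the roles of $\alpha,\beta,\gamma$ permuted, and the same angle identities persist. The outer boundary then closes into a triangle $\triangle A'B'C'$ with $A'=Q$, $B'=P$, and $C'$ the apex supplied by the third flank triangle.

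Next I would check the vertex angles. Around $B'=P$, for instance, three regions abut: the dashed triangle (angle $\beta$), the lowest triangle (angle $\beta'$), and a reflected corner triangle. The equality $\beta=\beta'$ from the preceding Lemma --- together with its reflected twin --- forces these three sub-angles to be equal, so the angle at $B'$ is trisected into three copies of $\beta$ and equals $3\beta$; likewise the angles at $A'$ and $C'$ are $3\alpha$ and $3\gamma$. Hence $\triangle A'B'C'$ has the same angles as the native triangle, so $\triangle A'B'C'\sim\triangle ABC$, and its angle trisectors are exactly the cevians of the figure, meeting in the equilateral triangle $W$. Thus $W$ is the Morley triangle of $\triangle A'B'C'$.

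Finally I would transport this back. A similarity carrying $\triangle A'B'C'$ onto $\triangle ABC$ sends trisectors to trisectors and hence $W$ onto the Morley triangle $T$ of $\triangle ABC$; since similarities preserve equilaterality and $W$ is equilateral, so is $T$. I expect the main obstacle to be the completion step: showing that the reflected blocks meet with neither gap nor overlap and that the edges through $A'$, $B'$, $C'$ are truly straight, so that the boundary is a triangle and not a polygon with more sides. This straightness is exactly what the identities $\alpha=\alpha'$ and $\beta=\beta'$ secure, and the reflections furnish the three-fold symmetry that the single constructed block does not by itself possess.
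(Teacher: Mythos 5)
Your proposal is correct and follows essentially the same route as the paper: build the full configuration outward from the fiat-equilateral $W$, use $\alpha=\alpha'$ and $\beta=\beta'$ (and their analogues at the other two vertices of $W$) to see that the outer boundary closes up into a triangle whose angles $3\alpha, 3\beta, 3\gamma$ are trisected by the construction lines, and then carry $W$ onto the Morley triangle $T$ by the similarity with $\triangle ABC$. The only cosmetic difference is that the paper assembles the figure from three ``outer'' triangles erected at the three vertices of $W$ rather than by literal reflections of a single block, but the angle bookkeeping at the outer vertices --- and the gap-and-overlap completion step you rightly flag as the delicate point --- is the same.
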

 \begin{proof}
 
We have shown that the lowest triangle in Figure \ref{fig:Equality of angle pairs} is similar to the lowest triangle in the native Morley trisector diagram. Let's call this triangle an \emph{outer triangle}. With the same ideas we can construct two additional ``outer" triangles, emanating from the other two vertices of the abstract equilateral triangle $W$, as in Figure \ref{fig:Return of the native triangle}. 

\begin{figure}[h] 
\centering
\includegraphics[scale=0.65]{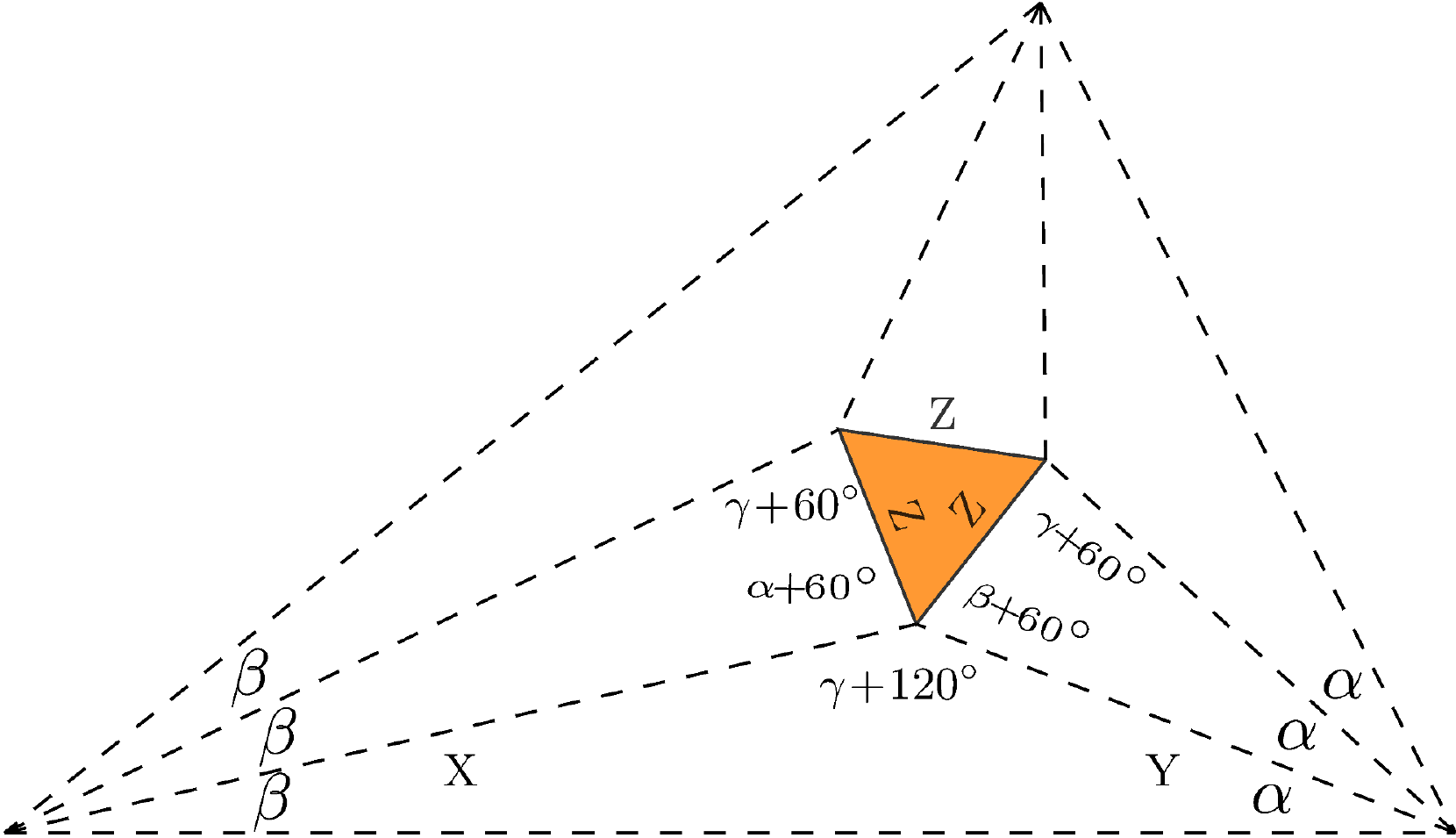}
 \caption{Return of the native triangle.}
 \label{fig:Return of the native triangle}
 \end{figure}

The three outer triangles ``surround" the abstract equilateral  triangle $W$ (in a friendly way). Their longest sides form a large triangle with angles $3\alpha, 3\beta , 3\gamma$. These angles are trisected by our dashed lines and the large triangle is similar to our native triangle $\triangle ABC$. The dilatation (zoom) factor that sends our constructed large triangle to the native triangle $\triangle ABC$ also sends the three constructed outer triangles to corresponding triangles in the original Morley picture, and identifies our abstract equilateral triangle $W$ with the native Morley triangle $T$, which, hence, is also equilateral.
 \end{proof}

 \section{Reflections.}

 We contend that the proof above is trig-light. For the law of sines emanates from basic geometry, as does the monotonicity of the sine function over acute angles. In contrast, see \cite{1965} for a (justifiably) trig-heavy proof of an analog of Morley's theorem.  This is not to disparage trig-heavy arguments, which can be surprisingly beautiful \cite{2016}.  Commentary on mathematical surprise and beauty, in the context of Morley's theorem,  is given  in G.-C.~Rota's book \cite{1997}.

 The law of sines, while less ancient than Euclid, is said to be implicit in Ptolemy's work \cite[p.~44]{2011}  and salient in the work of the 13th century mathematician Mohammed al-Tusi. Interestingly, the MathSciNet review of \cite{2011} points to al-Tusi's result that a pair of angles may be determined from their sum and the ratio of their sines. Our nonlinear Lemma 1 is a nondeterministic uniqueness variant.

 Proofs of the Morley trisector theorem abound in the literature. See, for instance \cite[pp.~23--25]{1969} and \cite{1998,  2014, 1900, 1996, 1978, 2015}. For some time, it was thought to be a difficult theorem to prove. (See, e.g., \cite{1998, 2014}.) Then a plethora of ``easy" proofs emerged, and the trend continues. 
 
 Yet, this theorem still does not appear in \emph{Proofs From the Book} \cite{2018}. Then again, neither does the far more senior  Pythagorean theorem, which has several hundred proofs \cite{1968}. Now, the 6th edition of \cite{2018} has six ``Book" proofs of the infinitude of the primes.  Is there a Book proof of the Morley trisector theorem? Perhaps the case of the Pythagorean theorem should be settled first.
 
 \section{Acknowledgements}

We thank everyone who gave us helpful comments and suggestions.
We also thank the authors of \LaTeX  \,  codes for the Morley trisector diagram, \cite{2010,2011b}. Our diagrams are based on their posted source material.

\end{document}